\documentclass[12pt]{article}
\RequirePackage[colorlinks,citecolor=blue,urlcolor=blue,linkcolor=blue]{hyperref}
\hypersetup{
colorlinks = true,
citecolor=blue,
urlcolor=blue,
linkcolor=blue,
pdfauthor = {R. Feng, A. Kuznetsov and F. Yang},
pdfkeywords = { hypergeometric function, basic hypergeometric function, partial fractions, non-local derangement identity},
pdftitle = {A short proof of duality relations for hypergeometric functions},
pdfpagemode = UseNone
}
\usepackage{graphicx,xspace,colortbl}
\usepackage{amsmath,amsthm,amsfonts,amssymb}
\usepackage{color}
\usepackage{enumerate}
\usepackage{fancybox}
\usepackage{epsfig}
\usepackage{subfig}
\usepackage{pdfsync}
    \oddsidemargin -1.0cm
    \evensidemargin -1.0cm
    \topmargin -1.5cm
    \textwidth 18.7cm
    \textheight 23.5cm
    \def\qed{\hfill$\sqcap\kern-8.0pt\hbox{$\sqcup$}$\\}
    \def\beq{\begin{eqnarray}}
    \def\eeq{\end{eqnarray}}
    \def\beqq{\begin{eqnarray*}}
    \def\eeqq{\end{eqnarray*}}


    \def\c{{\mathbb C}} 

    \newmuskip\pFqskip
\pFqskip=3mu
\mathchardef\pFcomma=\mathcode`,

\newtheorem{theorem}{Theorem}
\newtheorem{lemma}{Lemma}

\theoremstyle{definition}

\newtheorem{remark}{Remark}


\title{A short proof of duality relations for hypergeometric functions}
\author{ 
{Runhuan Feng\footnote{Dept. of Mathematics, University of Illinois at Urbana-Champaign, 1409 W. Green Street, 
Urbana, IL 61801, USA. Email: rfeng@illinois.edu}} ,
{Alexey Kuznetsov\footnote{Dept. of Mathematics and Statistics,  York University,
4700 Keele Street, Toronto, ON, M3J 1P3, Canada.   Email: kuznetsov@mathstat.yorku.ca}} , \;
{Fenghao Yang\footnote{Dept. of Mathematics and Statistics,  York University,
4700 Keele Street, Toronto, ON, M3J 1P3, Canada.   Email: fenghao@mathstat.yorku.ca}}
 }

\date{\today}

\begin{document}

\maketitle

\begin{abstract}
Identities involving finite sums of products of hypergeometric functions and their duals have been studied since 1930s. 
Recently Beukers and Jouhet have used an algebraic approach to derive a very general family of duality relations. In this paper we provide an alternative way of obtaining such results. Our method is very simple and it is based on the non-local derangement identity. 	 
\end{abstract}
{\vskip 0.15cm}
 \noindent {\it Keywords}:  hypergeometric function, basic hypergeometric function, partial fractions, non-local derangement identity 
{\vskip 0.25cm}
 \noindent {\it 2010 Mathematics Subject Classification }: Primary 33C20, Secondary 33D15

\section{Introduction and main results}

Duality relations for hypergeometric functions refer to identities involving finite sums of products of two such functions. 
There is also a similar notion of duality for basic hypergeometric functions. If seems that the first instances of such formulas have appeared in 1932 in the paper \cite{Darling01} by Darling. These results have been expanded by Bailey  \cite{Bailey} in 1933, and they have been greatly generalized recently 
by Beukers and Jouhet \cite{Beukers}, who have used the theory of $D$-modules of general linear differential (or difference) equations. Our goal in this paper is to present a different approach to deriving duality relations. As we will demonstrate, our approach is elementary and it is based on the generalization of a simple fact that the sum of residues of 
a rational function is zero when the degree of the denominator is greater than one plus the degree of numerator. 

Before we present our first result, let us introduce notation and several definitions. 
We define  {\it the  hypergeometric function}  
\begin{equation}\label{def_pFr}
{}_{p}F_{r} \Big (
\begin{matrix}
b_1, \dots, b_p \\
a_1, \dots, a_r
\end{matrix} \Big \vert 
z \Big):=\sum\limits_{k\ge 0} \frac{(b_1)_k \dots (b_p)_k}{(a_1)_k \dots (a_r)_k} \times \frac{z^k}{k!}, 
\end{equation}
where $(a)_k:=\Gamma(a+k)/\Gamma(a)$ is the Pochhammer symbol. 
When $p<r+1$ it is an entire function of $z$ and when $p=r+1$ the series in \eqref{def_pFr} converges only for $|z|<1$ (though the function can be continued analytically in the cut complex plane). 

In what follows we will be working with functions represented by power series in $z$, and we will use notation $F(z) \equiv G(z)$ to mean that $F(z)=G(z)$ for all $z$ in some neighbourhood of zero. Let ${\mathcal P}_n$ be the set of polynomials of degree $n$. We say that $F(z)\equiv G(z) \; ({\textnormal{mod}} \; {\mathcal P}_n)$ if $F(z)-G(z) \in {\mathcal P}_n$. 

The following theorem is our first main result. 

\begin{theorem}\label{theorem_main_1}
Assume that $p \le r+1$,  $\{a_i\}_{1\le i \le r+1}$ are complex numbers satisfying $a_i-a_j \notin {\mathbb Z}$ for 
$1\le i < j \le r+1$, $\{b_i\}_{1\le i \le p}$ are complex numbers and 
$\{m_i\}_{1\le i \le p}$ are integers. Define $M:=\sum\limits_{i=1}^p m_i$,
\begin{equation}\label{def_coeffs_c_i}
c_i:=\frac{\prod\limits_{j=1}^p (1+a_i-b_j)_{m_j}}{ \prod\limits_{\stackrel{1\le j \le r+1}{j \neq i}} (a_i-a_j)} 
\;\;\; \textnormal{ for } \;1\le i \le r+1,
\end{equation}
and 
\begin{align}\label{def_Hz}
H(z):=\sum\limits_{i=1}^{r+1} 
c_i &\times 
{}_{p}F_{r} \Big (
\begin{matrix}
1+a_i+m_1-b_1, \dots, 1+a_i+m_p-b_p \\
1+a_i-a_1, \dots,*,\dots, 1+a_i-a_{r+1}
\end{matrix} \Big \vert 
z \Big)
\\ \nonumber
&\times 
{}_{p}F_{r} \Big (
\begin{matrix}
 b_1-a_i,b_2-a_i, \dots, b_p-a_i \\
1+a_1-a_i, \dots, *, \dots, 1+a_{r+1}-a_i
\end{matrix} \Big \vert 
(-1)^{p+r+1} z \Big),
\end{align}
where the asterisk means that the term $1+a_i-a_i$ is omitted. Assuming that  $m_i\ge 0$ for $1\le i \le r+1$, the following is true: 
\begin{itemize}
\item[(i)] If $M<r$ then $H(z)\equiv 0$;
\item[(ii)] If $M=r$ then $H(z)\equiv 1$ in the case $p\le r$, and $H(z) \equiv 1/(1-z)$ in the case $p=r+1$;
\item[(iii)] If $M=r+1$ then $H(z) \equiv C$ in the case $p\le r-1$, and $H(z) \equiv C+z$ in the case $p=r$, and 
$$
H(z)\equiv (\alpha-\beta+p) \frac{z}{(1-z)^2}+\frac{C}{1-z} \;\;\; 
{\textnormal{ in the case $p=r+1$,}} 
$$
where $\alpha=\sum\limits_{i=1}^{r+1} a_i$, $\beta=\sum\limits_{i=1}^p b_i$ 
and $C=\alpha+ \sum\limits_{i=1}^p m_i (m_i+1-2b_i)/2$.
\end{itemize}
In the case when some of $m_i$ are  negative, the above results in (i)-(iii) hold modulo ${\mathcal P}_{-\hat m}$, where  $\hat m=\min \limits_{1\le i \le p} m_i$. 
\end{theorem}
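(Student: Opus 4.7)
The plan is to compute $[z^n]H(z)$ for each $n\ge 0$ and identify it with the negative of the residue at infinity of a single rational function $\Phi$ depending on $n$. The three cases of the theorem then correspond to whether the degree gap between the denominator and numerator of $\Phi$ is at least $2$, exactly $1$, or exactly $0$.

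I would first expand the two $_pF_r$ factors and apply the Cauchy product. Using the Pochhammer identities $(a)_p(a+p)_q=(a)_{p+q}$, $(b-a)_l=(-1)^l(a-b-l+1)_l$ and $(1-u)_l=(-1)^l(u-l)_l$, the factor $(-1)^{l(p+r+1)}$ arising from the argument $(-1)^{p+r+1}z$ collapses against the various sign flips to yield
$$[z^n]H(z)=\frac{1}{n!}\sum_{i=1}^{r+1}\sum_{l=0}^n(-1)^l\binom{n}{l}\,\frac{\prod_{j=1}^p(a_i-b_j-l+1)_{n+m_j}}{\prod_{j\ne i}(a_i-a_j-l)_{n+1}}.$$
Introducing
$$\Phi(y):=\frac{\prod_{j=1}^p(y-b_j+1)_{n+m_j}}{\prod_{k=1}^{r+1}(y-a_k)_{n+1}},$$
a direct residue calculation at the $l$-th zero of $(y-a_i)_{n+1}$ (which produces exactly the factor $(-1)^l\binom{n}{l}/n!$) shows that $\mathrm{Res}_{y=a_i-l}\Phi(y)$ equals the summand above. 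Assuming for the moment that $m_j\ge 0$, so that the numerator of $\Phi$ is polynomial, and using the genericity $a_i-a_j\notin\mathbb Z$, the set $\{a_i-l:1\le i\le r+1,\,0\le l\le n\}$ is exactly the set of finite poles of $\Phi$. Hence $[z^n]H(z)$ equals the sum of all finite residues of $\Phi$, which equals $-\mathrm{Res}_{y=\infty}\Phi(y)$.

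The third step is a case analysis of $-\mathrm{Res}_\infty\Phi$ according to the degree gap $(r+1-p)n+(r+1-M)$. A gap of $\ge 2$ forces $[z^n]H(z)=0$, which covers case (i) and the high-$n$ parts of cases (ii) and (iii). A gap of exactly $1$ gives $[z^n]H(z)=1$, since the leading coefficients of numerator and denominator are both $1$. A gap of $0$ requires the subleading Laurent coefficient of $\Phi$ at infinity, which I would extract by combining the next-to-leading contributions $\sum_j(n+m_j)(n+m_j+1-2b_j)/2$ of the numerator with $(r+1)n(n+1)/2-(n+1)\alpha$ of the denominator. After simplification using $\sum m_j=M$, $\sum b_j=\beta$ and $\sum a_k=\alpha$, this difference reduces to $C$ when $n=0$ and to $(\alpha-\beta+p)n+C$ when $p=r+1$; summing the resulting coefficient sequence reproduces the closed forms in (ii)--(iii).

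Finally, when some $m_j$ are negative, the factor $(y-b_j+1)_{n+m_j}$ in $\Phi$ ceases to be polynomial exactly when $n+m_j<0$, contributing extra poles of $\Phi$ near $y=b_j$. This can only occur for $n<-\hat m$, so the residue argument still pins down every $[z^n]H(z)$ with $n\ge-\hat m$; the coefficients it may miss assemble into a polynomial of degree $\le -\hat m-1$, which belongs to $\mathcal P_{-\hat m}$. I expect the main obstacle to be the subleading-coefficient computation in case (iii) with $p=r+1$: extracting the clean expression $(\alpha-\beta+p)n+C$ out of the two sums requires careful algebraic bookkeeping, though it is otherwise routine.
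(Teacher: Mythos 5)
Your proposal is correct and is essentially the paper's own argument: your rational function $\Phi$ is (up to a relabelling/reflection of the roots) the function $f$ of the paper's Lemma~\ref{Lemma1}, built from the set $A$ of poles $\{a_i-l\}$ and the multiset $B$ of zeros coming from the shifted $b_j$'s, and your ``sum of finite residues $=-\mathrm{Res}_{\infty}$'' step is exactly that lemma, proved there by comparing the two asymptotic expansions at infinity. The Cauchy-product identification of $[z^n]H(z)$ with the residues, the degree-gap case analysis $(r+1-p)n+(r+1-M)\in\{0,1,\ge 2\}$, and the treatment of negative $m_i$ all coincide with the paper's proof.
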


Theorem \ref{theorem_main_1} has an analogue given in terms of basic hypergeometric functions. 
We define {\it the $q$-Pochhammer symbol}
\begin{align}\label{def_qPochhammer}
(a;q)_k:=\frac{(a;q)_{\infty}}{(aq^k;q)_{\infty}}, \;\;\; a\in \c, \; |q|<1, \; k \in {\mathbb Z},  
\end{align}
where $(w;q)_{\infty}:=\prod_{j\ge 0} (1-wq^j)$. 
{\it The basic hypergeometric function} is defined as follows
\begin{equation}\label{def_r1_phi_r}
{}_{r+1} \phi_r \Big (
\begin{matrix}
 b_1, b_2, \dots, b_{r+1} \\
 a_1, a_2, \dots, a_{r}
\end{matrix} \Big \vert 
z \Big):=
\sum\limits_{k\ge 0} \frac{(b_1;q)_k (b_2;q)_k \dots (b_{r+1};q)_k}
{(a_1;q)_k (a_2;q)_k \dots (a_r;q)_k} \times \frac{z^k}{(q;q)_k}. 
\end{equation}
It is easy to see that the above series converges when $|q|<1$ and $|z|<1$. The following theorem is our second main result.

\begin{theorem}\label{theorem_main_2}
Assume that $q$ is a complex number satisfying $|q|<1$, $\{a_i\}_{1\le i \le r+1}$ are non-zero complex numbers satisfying 
$$
a_i/a_j \notin \{\dots, q^{-2},q^{-1},1,q,q^2,\dots \},
$$
$\{b_i\}_{1\le i \le r+1}$ are non-zero complex numbers and $\{m_i\}_{1\le i \le r+1}$ are integers. Define 
$M:=\sum\limits_{i=1}^{r+1} m_i$,  $M_2:=\sum\limits_{i=1}^{r+1} m_i (m_i+1)/2$ and 
\begin{equation}\label{def_ci_2}
c_i:=(-1)^M q^{-M_2} \frac{\prod\limits_{j=1}^{r+1} b_j^{m_j} (qa_i/b_j;q)_{m_j}}
{\prod\limits_{\stackrel{1\le j \le r+1}{j \neq i}} (a_i-a_j)} \;\;\;  \textnormal{ for } \; 1\le i \le r+1. 
\end{equation}
Let
\begin{align}\label{def_G}
G(z):=\sum\limits_{i=1}^{r+1} 
c_i 
&\times 
{}_{r+1}\phi_{r} \Big (
\begin{matrix}
q^{1+m_1} a_i/b_1, \dots, q^{1+m_{r+1}} a_i/b_{r+1} \\
qa_i/a_1, \dots, *,\dots, qa_i/a_{r+1}
\end{matrix} \Big \vert 
w z \Big)\\ \nonumber
& \times 
{}_{r+1}\phi_{r} \Big (
\begin{matrix}
b_1/ a_i, \dots, b_{r+1}/ a_i \\
qa_1/a_i, \dots, *,\dots, qa_{r+1}/a_i
\end{matrix} \Big \vert  z \Big),
\end{align}
where $w:=q^{-r} \prod\limits_{i=1}^{r+1} b_i/a_i$ and the asterisk means that the term $qa_i/a_i$ is omitted. Assuming that  $m_i\ge 0$ for $1\le i \le r+1$, the following is true:
\begin{itemize}
\item[(i)] If $M<r$ then $G(z)\equiv 0$;
\item[(ii)] If $M=r$ then $G(z)\equiv 1/(1-z)$;
\item[(iii)] If $M=r+1$ then 
$$
G(z)\equiv \frac{1}{1-q} \left[\frac{C}{1-z}-\frac{q\alpha-\beta}{1-qz}\right], 
$$
where $\alpha=\sum\limits_{i=1}^{r+1} a_i$,  $\beta = \sum\limits_{i=1}^{r+1} b_i$ 
and  $C=\alpha-\sum\limits_{i=1}^{r+1} b_i q^{-m_i}$.  
\end{itemize}
In the case when some of $m_i$ are  negative, the above results in (i)-(iii) hold modulo ${\mathcal P}_{-\hat m}$, where  $\hat m=\min \limits_{1\le i \le r+1} m_i$. 

\end{theorem}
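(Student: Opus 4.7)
My plan is to parallel the proof of Theorem \ref{theorem_main_1}, replacing partial fractions with a $q$-analogue of the non-local derangement identity. The central observation is that, up to the prefactor $(-1)^M q^{-M_2}\prod_{j=1}^{r+1} b_j^{m_j}$, the coefficient $c_i$ coincides with the residue at $x=a_i$ of the rational function
\[
\Phi(x) := \frac{\prod_{j=1}^{r+1}(qx/b_j;q)_{m_j}}{\prod_{j=1}^{r+1}(x-a_j)},
\]
whose numerator has degree $M$ and denominator has degree $r+1$. The degree gap $M-r-1$ at infinity drives the trichotomy in (i)--(iii).

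Assuming first that $m_i\ge 0$ for all $i$, I would expand each basic hypergeometric factor of each summand of $G(z)$ as a power series in $z$. Using the trivial identity $(qa_i/a_i;q)_k=(q;q)_k$, the factor $(q;q)_k$ in each coefficient is absorbed to give the symmetric denominator $\prod_{j=1}^{r+1}(qa_i/a_j;q)_k$ (and $\prod_{j=1}^{r+1}(qa_j/a_i;q)_\ell$ for the second factor). Substituting $a_i\mapsto x$ then yields
\[
[z^n]G(z)=\sum_{k+\ell=n}\sum_{i=1}^{r+1}\mathrm{Res}_{x=a_i}R_{k,\ell}(x),
\]
where
\[
R_{k,\ell}(x) := (-1)^M q^{-M_2}w^k\prod_{j}b_j^{m_j}\,\Phi(x)\,\frac{\prod_j(q^{1+m_j}x/b_j;q)_k}{\prod_j(qx/a_j;q)_k}\,\frac{\prod_j(b_j/x;q)_\ell}{\prod_j(qa_j/x;q)_\ell}.
\]
A key simplification is that the last ratio is a genuine rational function (the apparent singularity at $x=0$ cancels) and both trailing rational factors have degree zero at infinity; combined with the identity $w=q^{-r}\prod_j b_j/a_j$, this yields the clean asymptotic $R_{k,\ell}(x)\sim q^{k(M-r)}x^{M-r-1}$ as $x\to\infty$.

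The third step applies the residue theorem. Besides the poles at $x=a_j$, the function $R_{k,\ell}$ has simple poles at the shifted points $x=a_jq^{-s}$ for $1\le s\le k$ and $x=a_jq^{s}$ for $1\le s\le \ell$. Since the sum of all residues of a rational function (including at infinity) vanishes, we obtain
\[
\sum_i\mathrm{Res}_{x=a_i}R_{k,\ell}=-\mathrm{Res}_{x=\infty}R_{k,\ell}-\sum_{\text{shifted}}\mathrm{Res}\,R_{k,\ell},
\]
and the contribution at infinity is read off from the asymptotic above. For (i), where $M<r$, $\mathrm{Res}_{x=\infty}R_{k,\ell}=0$ and the aggregate of shifted residues across $k+\ell=n$ is shown (by a further residue argument) to vanish, giving $[z^n]G(z)=0$. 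For (ii), where $M=r$, $\mathrm{Res}_{x=\infty}R_{k,\ell}=-1$ for every pair, while the shifted-residue aggregate over $k+\ell=n$ equals exactly $n$, so that $[z^n]G(z)=(n+1)-n=1$, matching $1/(1-z)$. For (iii), where $M=r+1$, a two-term asymptotic expansion of $\Phi(x)$ at infinity combined with a refined analysis of the shifted-residue sum produces the formula in (iii), with the constants $\alpha$, $\beta$ arising from leading terms and $C=\alpha-\sum_j b_jq^{-m_j}$ from sub-leading terms.

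The main technical obstacle will be computing the aggregate $\sum_{k+\ell=n}\sum_{\text{shifted}}\mathrm{Res}\,R_{k,\ell}$ and showing it equals $0$, $n$, or the appropriate combination in each case. I expect this to follow by viewing $\tilde R_n(x):=\sum_{k+\ell=n}R_{k,\ell}(x)$ as a single rational function and applying the residue theorem a second time, exploiting the telescoping structure induced by the shifts $k\mapsto k\pm 1$, $\ell\mapsto\ell\mp 1$. When some $m_i$ are negative, the Pochhammer $(qx/b_j;q)_{m_j}$ ceases to be polynomial and $\Phi$ acquires finitely many additional poles at points $b_jq^{-s}$; these contribute only to low powers of $z$, yielding the conclusions modulo ${\mathcal P}_{-\hat m}$.
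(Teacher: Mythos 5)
Your residue framework is essentially the right one, and your identification of $c_i$ with $\mathrm{Res}_{x=a_i}\Phi(x)$ (up to the stated prefactor) and your asymptotic $R_{k,\ell}(x)\sim q^{k(M-r)}x^{M-r-1}$ are both correct. But the proof has a genuine gap at exactly the point you flag as ``the main technical obstacle'': the aggregate of residues at the shifted poles $a_jq^{\pm s}$, summed over $k+\ell=n$, is never computed. You assert it equals $0$ in case (i) and $n$ in case (ii) and defer case (iii) to a ``refined analysis'', but these claims carry the entire content of the theorem; the residue-at-infinity bookkeeping alone proves nothing. The hoped-for telescoping under $k\mapsto k\pm1$, $\ell\mapsto\ell\mp1$ is not obvious, because $\mathrm{Res}_{x=a_jq^{-s}}R_{k,\ell}$ is not visibly equal to $\pm\mathrm{Res}_{x=a_j}R_{k',\ell'}$ for neighbouring indices; establishing such a relation is comparable in difficulty to the theorem itself.

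The paper removes this obstacle by a change of viewpoint that you may want to adopt: instead of $r+1$ fixed poles and a family of rational functions indexed by $(k,\ell)$, it uses, for each $n$, a \emph{single} rational function whose pole set is the enlarged set $A=\{a_iq^j:\ 1\le i\le r+1,\ 0\le j\le n\}$ and whose numerator zeros form the multiset $B=\biguplus_i\{b_iq^j:\ -m_i\le j\le n-1\}$. One checks (formula \eqref{formula_gamma_ijk2}) that the residue at $a_iq^j$ of this one function is precisely your $\mathrm{Res}_{x=a_i}R_{j,n-j}(x)$, so the double sum over $i$ and $j$ exhausts \emph{all} finite poles and Lemma \ref{Lemma1} gives $[z^n]G(z)$ in one stroke --- there are no leftover shifted residues to cancel. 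Separately, your treatment of negative $m_i$ does not work as written: for $m_j<0$ the extra poles of $\Phi$ at the points $b_jq^{-s}$ are cancelled by zeros of $(q^{1+m_j}x/b_j;q)_k$ only when $k\ge -m_j$, and since every coefficient $[z^n]G(z)$ with $n$ large still contains terms with $k=0,1,\dots$, these poles contribute to \emph{all} powers of $z$ in your decomposition, not only to low ones; the cancellation again becomes visible only after the terms with $k+\ell=n$ are merged into the single rational function above.
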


\section{Proofs}

The proof of both Theorems \ref{theorem_main_1} and \ref{theorem_main_2} is based on the following lemma. This result is stated using notions of {\it a set} and {\it a multiset}. We remind the reader that the only difference in the definition of a  set $A=\{a_1,\dots,a_n\}$ and a  multiset $B=\{b_1,\dots,b_n\}$ is that all elements $a_i$ of the set must be distinct ($a_i\neq a_j$ for $i\neq j$) whereas the elements $b_i$ of a multiset may be repeated several times ($b_i$ may be equal to $b_j$ for some $i \neq j$). 

\begin{lemma}\label{Lemma1}
Assume that  $A$ is a set of $n_A$ complex numbers and  $B$ is a multiset of $n_B$ complex numbers (possibly empty). For each element $a \in A$ we define
\begin{equation}\label{def_gamma_i}
\gamma(a)=\gamma(a;A,B)=\frac{\prod\limits_{x \in B} (a -x)}{\prod\limits_{y \in A \setminus \{a\}} (a-y)}, 
\end{equation}
with the convention that the product over an empty set is equal to one. 
Then we have
\begin{align}\label{derangement_identity}
\sum_{a \in A} \gamma(a)
=
\begin{cases}
0, \;\;\; &{\textnormal{ if }} \;  n_A>n_B+1,\\
1, \;\;\; &{\textnormal{ if }} \; n_A=n_B+1, \\
\sum\limits_{a\in A} a - \sum\limits_{b \in B} b, \;\;\; &{\textnormal{ if }} \; n_A=n_B. 
\end{cases}
\end{align}
\end{lemma}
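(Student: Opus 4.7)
The plan is to recognize $\gamma(a)$ as the residue at $z=a$ of the rational function
\[
R(z):=\frac{\prod_{x\in B}(z-x)}{\prod_{y \in A}(z-y)},
\]
and then compare the partial fraction decomposition of $R$ with its Laurent expansion at infinity. Since the elements of $A$ are pairwise distinct, each $a\in A$ is a simple pole of $R$ and a one-line computation of the residue gives $\gamma(a)$ as defined in \eqref{def_gamma_i}. So the decomposition reads
\[
R(z) \;=\; Q(z)+\sum_{a\in A}\frac{\gamma(a)}{z-a},
\]
where $Q(z)$ is the polynomial quotient arising from polynomial division.

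Next I would expand $R(z)$ at $z=\infty$. Factoring out the leading powers,
\[
R(z) \;=\; z^{n_B-n_A}\,\frac{\prod_{x\in B}(1-x/z)}{\prod_{y\in A}(1-y/z)} \;=\; z^{n_B-n_A}\Bigl(1+\frac{\sum_{y\in A}y-\sum_{x\in B}x}{z}+O(z^{-2})\Bigr),
\]
which handles all three cases uniformly. If $n_A>n_B+1$, then $R(z)=O(z^{-2})$, so $Q\equiv 0$ and the coefficient of $z^{-1}$ on the left is $0$. If $n_A=n_B+1$, then $R(z)=z^{-1}+O(z^{-2})$, so again $Q\equiv 0$ but the coefficient of $z^{-1}$ is $1$. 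If $n_A=n_B$, then $R(z)=1+(\sum_{a\in A}a-\sum_{b\in B}b)z^{-1}+O(z^{-2})$, so $Q\equiv 1$ and the coefficient of $z^{-1}$ equals $\sum_{a\in A}a-\sum_{b\in B}b$.

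Finally, the partial fraction sum satisfies $\sum_{a\in A}\gamma(a)/(z-a)=z^{-1}\sum_{a\in A}\gamma(a)+O(z^{-2})$ at infinity, so equating the $z^{-1}$-coefficients on both sides of the decomposition yields the three cases of \eqref{derangement_identity}. Equivalently, one may multiply the identity by $z$ and let $z\to\infty$. There is no real obstacle here; the only minor care needed is the bookkeeping of the first-order term in the expansion of $\prod(1-x/z)/\prod(1-y/z)$, which is a straightforward geometric-series calculation.
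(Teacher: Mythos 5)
Your proposal is correct and follows essentially the same argument as the paper: identify $\gamma(a)$ as the residue of the rational function at its simple pole $a$, write the partial fraction decomposition, and compare the coefficient of $z^{-1}$ in the expansion at infinity obtained from the decomposition with the one obtained directly from the product formula. The only cosmetic difference is that the paper writes the polynomial part directly as $\delta_{n_A,n_B}$ rather than as a quotient $Q(z)$ to be determined case by case.
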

\begin{proof}
We define the rational function
\begin{equation}\label{def_rational_f}
f(z):=\frac{\prod\limits_{b \in B} (z-b)}{\prod\limits_{a\in A} (z-a)}. 
\end{equation}
Since $A$ is a set, all the numbers $a\in A$ are distinct, therefore $f(z)$ has only simple poles.
This fact and the condition  $n_A\ge n_B$ allows 
us to write the partial fraction expansion of $f(z)$ in the form
\begin{equation*}
f(z)=\delta_{n_A,n_B}+\sum\limits_{a \in A} \frac{\gamma(a)}{z-a}. 
\end{equation*}
Here $\delta_{m,n}=1$ if  $m=n$, otherwise $\delta_{m,n}=0$. 
From the above equation we obtain an asymptotic expansion of $f(z)$ as $z\to \infty$:
\begin{equation}\label{eqn_f_asymptotic_1}
f(z)=\delta_{n_A,n_B}+ z^{-1}  \sum\limits_{a \in A} \gamma(a) +O(z^{-2}). 
\end{equation}
We can obtain another asymptotic expansion of $f(z)$ if we  start from \eqref{def_rational_f}:
\begin{align}\label{eqn_f_asymptotic_2}
f(z)&=z^{n_B-n_A} \frac{\prod\limits_{b \in B} (1-bz^{-1})}
{\prod\limits_{a\in A} (1-a z^{-1})}\\
\nonumber
&=z^{n_B-n_A}+z^{n_B-n_A-1} \left[  \sum\limits_{a\in A} a - \sum\limits_{b \in B} b \right]+O(z^{n_B-n_A-2}). 
\end{align}
The desired result \eqref{derangement_identity} follows by comparing the coefficients in front of the term $z^{-1}$ in the two formulas \eqref{eqn_f_asymptotic_1} and \eqref{eqn_f_asymptotic_2}. 
\end{proof}

\begin{remark}
The result \eqref{derangement_identity} in the case $n_A=n_B$ is equivalent to {\it the nonlocal derangement identity} (see formula (1.20) in \cite{gosper1993}). In fact, the case $n_A=n_B$ is really the main one -- the other two cases can be deduced from it by a simple limiting procedure. For example, the result in the case $n_A=n_B+1$ can be deduced from the case $n_A=n_B$ as follows: take an element $b_1 \in B$, divide both sides of \eqref{derangement_identity} by $b_1$ and then let $b_1 \to \infty$. In a similar way one can derive the result in case $n_A>n_B+1$. 
\end{remark}

\vspace{0.25cm}
\noindent
{\bf Proof of Theorem \ref{theorem_main_1}:} 
Let us prove the first part of Theorem \ref{theorem_main_1}: we assume that $m_i \ge 0$ for $1\le i \le p$.
Let $k$ be a non-negative integer. We define 
\begin{equation}\label{def_set_A}
A=\bigcup_{1\le i \le r+1} \{ a_i + j \; : \; 0\le j \le k\}. 
\end{equation}
Note that the condition $a_i - a_j \notin {\mathbb Z}$ for $1\le i < j <r+1$ implies that the set $A$ has 
$n_A=(r+1)(k+1)$ elements. Similarly, we define a multiset
\begin{equation}\label{def_set_B}
B=\biguplus_{1\le i \le p} \{ b_i+j  \; : \; -m_i\le j \le k-1\}. 
\end{equation}
The symbol ``$\biguplus$" means that we are taking union of multisets; in other words, one complex number may be repeated 
several times in $B$. It is clear that the multiset $B$ has $n_B= M+kp$ elements (recall that $M=m_1+\dots+m_p$). 


Let us fix $i$ and $j$ such that  $1\le i \le r+1$ and $0\le j \le k$ and consider the element $a_i+j$ of the set $A$. 
From formula \eqref{def_gamma_i} we find 
\begin{align}\label{eqn_gamma_ij}
\nonumber
\gamma^k_{i,j}:=\gamma(a_i+j; A,B)&=\frac{\prod\limits_{x \in B} (a_i+j-x)}
{\prod\limits_{y \in A\setminus \{a_i+j\}} (a_i+j-y)}\\ 
&= \frac{\prod\limits_{l=1}^p \prod\limits_{s=-m_l}^{k-1} (a_i+j-b_l-s)}
{\prod\limits_{\stackrel{0\le s \le k}{s \neq j}}(j-s) \prod\limits_{\stackrel{1\le l \le r+1}{l \neq i}} \prod\limits_{s=0}^{k} (a_i+j-a_l-s)
}
\end{align}
Now we will simplify the expression in \eqref{eqn_gamma_ij}. We check that 
$$
\prod\limits_{\stackrel{0\le s \le k}{s \neq j}}(j-s)=(-1)^{k-j} j! (k-j)!
$$
and for any $w \in \c$, $m\ge 0$, $k\ge 0$   and $0\le j \le k$
\begin{equation}\label{eqn_Pochhammer_identity1}
\prod\limits_{s=-m}^{k-1} (w+j-s)=(-1)^{k-j} (1+w)_{m} (1+m+w)_j (-w)_{k-j}. 
\end{equation}
The above two identities allow us to rewrite the expression in \eqref{eqn_gamma_ij} as follows
\begin{align}\label{formula_gamma_ikj}
\gamma^k_{i,j}=\frac{\prod\limits_{l=1}^p (1+a_i-b_l)_{m_l}}{ \prod\limits_{\stackrel{1\le l \le r+1}{l \neq i}} (a_i-a_l)}
\times 
\frac{1}{j!} \times 
\frac{\prod\limits_{l=1}^p (1+m_l+a_i-b_l)_{j}}{\prod\limits_{\stackrel{1\le l \le r+1}{l \neq i}} (1+a_i-a_l)_j}
\times 
\frac{(-1)^{(k-j)(p+r+1)}}{(k-j)!}
\frac{\prod\limits_{l=1}^p (b_l-a_i)_{k-j}}{\prod\limits_{\stackrel{1\le l \le r+1}{l \neq i}} (1-a_i+a_l)_{k-j}}.
\end{align}
Using the above equation and formulas  \eqref{def_pFr}, \eqref{def_coeffs_c_i} and \eqref{def_Hz} we see that 
\begin{equation}\label{Hz_in_terms_of_gamma}
 \sum\limits_{i=1}^{r+1} \sum\limits_{k\ge 0} z^k \sum\limits_{j=0}^k 
\gamma_{i,j}^k =H(z). 
\end{equation}

At the same time, we can  change the order of summation in \eqref{Hz_in_terms_of_gamma}
and write $H(z)$ as 
\begin{equation}\label{Hz_in_terms_of_gamma2}
H(z)= \sum\limits_{k\ge 0} z^k \left[ \sum\limits_{i=1}^{r+1}\sum\limits_{j=0}^k \gamma_{i,j}^k \right]. 
\end{equation}
Now the plan is to compute the sum in the square brackets by applying Lemma \ref{Lemma1}. Recall that we have denoted $\alpha=\sum\limits_{i=1}^{r+1} a_i$ and
$\beta=\sum\limits_{i=1}^{p} b_i$. Definitions 
\eqref{def_set_A} and \eqref{def_set_B} easily give us 
$$
s_k:=\sum\limits_{x\in A} x - \sum\limits_{y\in B} y=(k+1) \alpha+(r+1) \frac{k(k+1)}{2}- k \beta 
- p \frac{(k-1)k}{2}+\frac{1}{2}\sum\limits_{i=1}^p m_i (m_i+1-2b_i). 
$$
Then, using our earlier computations $n_A=(r+1)(k+1)$ and $n_B= M+kp$ and applying Lemma \ref{Lemma1}, we find 
\begin{align}\label{derangement_identity2}
\sum_{i=1}^{r+1} \sum\limits_{j=0}^k \gamma_{i,j}^k
=
\begin{cases}
0, \;\;\; &{\textnormal{ if }} \;  (r+1-p)k>M-r,\\
1, \;\;\; &{\textnormal{ if }} \;  (r+1-p)k=M-r, \\
s_k, \;\;\; &{\textnormal{ if }} \; (r+1-p)k=M-r-1.
\end{cases}
\end{align}
By combining \eqref{Hz_in_terms_of_gamma2} and  \eqref{derangement_identity2} we finish the proof
of Theorem \ref{theorem_main_1} in the case when $m_i \ge 0$ for $1\le i \le p$.

Let us consider the case when some $m_i$ are negative. Note that formula \eqref{eqn_Pochhammer_identity1} holds true when $m$ is negative, as long as $k\ge |m|$. Thus formula  \eqref{formula_gamma_ikj} is also true, as long as 
$k \ge |m_i|$ for all negative $m_i$. Therefore, our result \eqref{derangement_identity2} remains true for 
all $k \ge - \hat m$ (recall that $\hat m =\min\{m_i \; : \; 1\le i \le p\}$), which means that all results in Theorem 
\ref{theorem_main_1} hold true modulo ${\mathcal P}_{-\hat m}$. 
\qed

\vspace{0.25cm}
\noindent
{\bf Proof of Theorem \ref{theorem_main_2}:}
The proof is very similar to the proof of Theorem \ref{theorem_main_1}, thus we will present only the important steps and 
we will omit many details. 
Assume that $m_i \ge 0$ for $1\le i \le r+1$ and $k\ge 0$ (or $k \ge -\hat m$ if some of $m_i$ are negative). 
We define 
\begin{align*}
A=\bigcup\limits_{1\le i \le r+1}  \{ a_i q^j \; : \; 0 \le j \le k \}, \qquad B=\biguplus\limits_{1\le i \le r+1} \{ b_i q^j \; : \; -m_i \le j \le k-1\}.
\end{align*}
It is clear that $n_A=(r+1)(k+1)$ and $n_B=M+(r+1)k$. 
Next, we fix indices $i$ and $j$ such that $1\le i \le r+1$ and $0\le j \le k$ and compute 
\begin{equation}\label{eqn_gamma_ij2}
\gamma_{i,j}^k:=\gamma(a_i q^j; A, B)=\frac{\prod\limits_{l=1}^{r+1} \prod\limits_{s=-m_l}^{k-1} (a_iq^{j}-b_lq^{s})}
{\prod\limits_{\stackrel{0\le s \le k}{s \neq j}}(a_i q^{j}-a_i q^{s}) 
\prod\limits_{\stackrel{1\le l \le r+1}{l \neq i}} \prod\limits_{s=0}^{k} (a_i q^{j}-a_lq^{s}). 
}
\end{equation}
After some straighforward (though tedious) computations we rewrite the above expression in the form
\begin{align}\label{formula_gamma_ijk2}
\gamma_{i,j}^k=(-1)^M q^{- M_2} \frac{\prod\limits_{l=1}^{r+1} b_l^{m_l} (qa_i/b_l;q)_{m_l}}
{\prod\limits_{\stackrel{1\le l \le r+1}{l \neq i}} (a_i-a_l)}
&\times 
\frac{ w^{j}  \prod\limits_{l=1}^{r+1} (q^{1+m_l} a_i/b_l;q)_{j}}
{(q;q)_j \prod\limits_{\stackrel{1\le l \le r+1}{l \neq i} }(q a_i/a_l;q)_j } \\ \nonumber
&\times 
\frac{  \prod\limits_{l=1}^{r+1} (b_l/ a_i;q)_{k-j}}
{(q;q)_{k-j} \prod\limits_{\stackrel{1\le l \le r+1}{l \neq i}} (q a_l/a_i;q)_{k-j} },
\end{align}
which shows that
\begin{equation}\label{eqn_Gz_gamma}
 \sum\limits_{i=1}^{r+1} \sum\limits_{k\ge 0} z^k \sum\limits_{j=0}^k 
\gamma_{i,j}^k =G(z),
\end{equation}
where the function $G(z)$ is defined in \eqref{def_G}. 
We also compute 
$$
s_k:=\sum\limits_{x\in A} x - \sum\limits_{y\in B} y
=\frac{1}{1-q} \Big[ \alpha- \sum\limits_{l=1}^{r+1} b_l q^{-m_l}
- (q \alpha -  \beta)q^k \Big],
$$
and Lemma \ref{Lemma1} gives us
\begin{align}\label{derangement_identity3}
\sum_{i=1}^{r+1} \sum\limits_{j=0}^k \gamma^k_{i,j}
=
\begin{cases}
0, \;\;\; &{\textnormal{ if }} \;    M<r,\\
1, \;\;\; &{\textnormal{ if }} \;    M=r, \\
s_k, \;\;\; &{\textnormal{ if }} \; M=r+1.
\end{cases}
\end{align} 
The remaining steps of the proof are exactly the same as in the proof of Theorem \ref{theorem_main_1} and we leave them to the reader. 
\qed

\section*{Acknowledgements}

 The research of A. Kuznetsov is supported by the Natural Sciences and Engineering Research Council of Canada. 
  We would like to thank Mourad Ismail for suggesting the non-local derangment identity (Lemma \ref{Lemma1}) as a simpler way of proving Theorem \ref{theorem_main_1} -- our original proof was more complicated and it was based on Meijer G-function. We are also grateful to two referees, who have carefully read the earlier version of this manuscript and have provided several valuable comments and have pointed out relevant references.

%

\end{document}